\title{On generalized $\pounds-$ cotorsion LCA groups}
\author{Aliakbar Alijani}
\newtheorem{thm}{Theorem}[section]
\newtheorem{cor}[thm]{Corollary}
\newtheorem{lem}[thm]{Lemma}
\newtheorem{ex}[thm]{Example}
\newtheorem{defn}[thm]{Definition}
\numberwithin{equation}{section}
\begin{document}
\date{}
\maketitle
\begin{abstract}
A locally compact abelian group $G$ is called a generalized $\pounds-$cotosion group if $G$ contains  an open $\pounds-$cotosion subgroup $H$ such that $G/H$ is a cotorsion group. In this paper, we determine the  generalized $\pounds-$ cotorsion LCA groups.

\end{abstract}

\section{Introduction}
\newcommand{\stk}{\stackrel}
Let $\pounds$ be the category of all locally compact abelian (LCA) groups with continuous homomorphisms as morphisms. A morphism is called proper if it is open onto its image and a short exact sequence $0\to A\stackrel{\phi}{\to} B\stackrel{\psi}{\to}C\to 0$ in $\pounds$ is said to be an extension of $A$ by $C$ if $\phi$ and $\psi$ are proper morphisms. We let $Ext(C,A)$ denote the group extensions of $A$ by $C$ \cite{FG1}. A discrete group $A$ is called cotorsion if $Ext(X,A)=0$ for all discrete torsion-free groups $X$. The theory of cotorsion groups was developed by Harrison for the first time \cite{H}. For more on cotorsion groups, see \cite{F}. In \cite{Fu2}, Fulp generalized the concept of cotorsion groups to LCA groups. A group $G\in\pounds$ is called an $\pounds-$cotorsion group if $Ext(X,G)=0$ for all torsion-free groups $X\in\pounds$ \cite{Fu2}. Fulp studied the $\pounds-$cotorsion LCA groups and determined the discrete or compact $\pounds-$cotorsion groups \cite{Fu2}. In this paper, we generalize the concept of $\pounds-$ cotorsion groups. A group $G\in \pounds$ will be called a generalized $\pounds-$cotosion group if $G$ contains  an open $\pounds-$cotosion subgroup $H$ such that $G/H$ is a cotorsion group. In this paper, we determine the discrete or compact generalized $\pounds-$cotorsion groups (see Lemma \ref{6} and \ref{7}). We also determine non discrete, divisible,generalized $\pounds-$cotorsion groups (see Lemma \ref{8}).

The additive topological group of real numbers is denoted by $\Bbb R$, $\Bbb Q$ is the group of rationals with the discrete topology, $\Bbb Z$ is the group of integers and $\Bbb Z(n)$ is the cyclic group of order $n$. For any group $G$ and $H$, $tG$ is the maximal torsion subgroup of $G$ and $Hom(G,H)$, the group of all continuous homomorphisms from $G$ to $H$, endowed with the compact open topology. The dual group of $G$ is $\hat{G}=Hom(G,\Bbb R/\Bbb Z)$ and $(\hat{G},S)$ denotes the annihilator of $S\subseteq G$ in $\hat{G}$. For more on locally compact abelian groups, see \cite{HR}.

\section{Generalized $\pounds-$ cotorsion LCA groups}

\begin{defn}
A locally compact abelian group $G$ is called a generalized $\pounds-$cotosion group if $G$ contains  an open $\pounds-$cotosion subgroup $H$ such that $G/H$ is a cotorsion group.
\end{defn}

\begin{ex}
Discrete cotorsion groups and $\pounds-$cotorsion groups are generalized $\pounds-$cotosion.
\end{ex}

\begin{lem} \label{6}
A discrete group $G$ is a generalized $\pounds-$cotosion group if and only if $G$ is a cotorsion group.
\end{lem}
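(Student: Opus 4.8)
The plan is to prove the two implications separately, and the backward implication is essentially already recorded in the preceding Example. Indeed, if $G$ is a discrete cotorsion group, then the trivial subgroup $H=\{0\}$ is open in $G$ (every subgroup of a discrete group is open), it is vacuously $\pounds-$cotorsion since $Ext(X,0)=0$ for every $X\in\pounds$, and $G/H=G$ is cotorsion by hypothesis; hence $G$ is a generalized $\pounds-$cotorsion group.

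For the forward implication I would start from a discrete generalized $\pounds-$cotorsion group $G$ and fix an open $\pounds-$cotorsion subgroup $H$ with $G/H$ cotorsion. Since $G$ is discrete, both $H$ and $G/H$ are discrete. The first step is to observe that a discrete $\pounds-$cotorsion group is cotorsion: the extension groups that must vanish for $H$ to be cotorsion are the $Ext(X,H)$ with $X$ discrete and torsion-free, and these $X$ form a subclass of the torsion-free groups in $\pounds$; moreover, for $X$ and $H$ discrete the group $Ext(X,H)$ computed in $\pounds$ agrees with the usual $Ext$ of abelian groups, because a proper extension of a discrete group by a discrete group has discrete middle term (the kernel is then open). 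Hence $H$ is cotorsion.

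The second step is to deduce that $G$ itself is cotorsion. For an arbitrary discrete torsion-free group $X$, the short exact sequence $0\to H\to G\to G/H\to 0$ induces an exact sequence $Ext(X,H)\to Ext(X,G)\to Ext(X,G/H)$; both outer terms vanish because $H$ and $G/H$ are cotorsion, so $Ext(X,G)=0$. Therefore $G$ is cotorsion, which is exactly the statement that the class of cotorsion groups is closed under extensions.

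The only point that needs genuine care is the identification of $Ext$ in $\pounds$ with $Ext$ in the category of abelian groups when the arguments are discrete; this is what licenses the passage from "$\pounds-$cotorsion" to "cotorsion" for the subgroup $H$. Once that identification is in place, the remainder is a routine diagram chase with the long exact sequence for $Ext$, using that abelian groups have projective dimension at most one so that the relevant connecting sequence is exact in the needed range.
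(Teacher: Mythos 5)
Your proof is correct, but it takes a genuinely different route from the paper's. The paper invokes Fulp's structure theorem (Corollary 10 of \cite{Fu2}) to identify the open $\pounds-$cotorsion subgroup $H$ as a divisible torsion group; divisibility makes the discrete extension $0\to H\to G\to G/H\to 0$ split, so $G\cong H\oplus G/H$ is a direct sum of two cotorsion groups and hence cotorsion. You instead avoid all structure theory: you note that a discrete $\pounds-$cotorsion group is already cotorsion straight from the definitions (the discrete torsion-free groups form a subclass of the torsion-free groups in $\pounds$, and for discrete arguments $Ext$ in $\pounds$ is the ordinary $Ext$ of abelian groups, since a proper extension of discrete by discrete is discrete), and then squeeze $Ext(X,G)$ between $Ext(X,H)=0$ and $Ext(X,G/H)=0$ using the exact sequence of Theorem \ref{2}. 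This is the statement that cotorsion groups are closed under extensions, and it is more economical than the paper's argument; what the paper's route buys is the explicit decomposition $G\cong H\oplus G/H$, in the same spirit as the compact case in Lemma \ref{7}. Two minor remarks: your closing appeal to ``projective dimension at most one'' is superfluous here, since the needed exactness at the middle term is already part of the Fulp--Griffith six-term sequence quoted in the paper; and your treatment of the converse via $H=0$ is fine and matches the paper's one-line dismissal.
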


\begin{proof}
Let $G$ be a discrete generalized $\pounds-$cotosion group. Then, there exists an $\pounds-$ cotorsion subgroup $H$ of $G$ such that $G/H$ is a cotorsion group. By Corollary 10 of \cite{Fu2}, $H$ is a divisible torsion group. So, $0\to H\hookrightarrow G\to G/H\to 0$ splits. Hence $$G\cong H\oplus G/H$$ It follows that $G$ is a cotorsion group. Conversely is clear.
\end{proof}

\begin{lem} \label{7}
A compact group $G$ is a generalized $\pounds-$cotosion group if and only if $G\cong M\oplus \Bbb Z(n)$ which $M$ is a connected group and $n$ a positive integer number.
\end{lem}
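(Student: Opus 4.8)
My plan is to prove the two implications separately, relying on the classification of compact $\pounds$-cotorsion groups from \cite{Fu2}, together with Pontryagin duality and the structure theory of compact abelian groups.

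The implication $(\Leftarrow)$ is direct. Suppose $G\cong M\oplus\Bbb Z(n)$ with $M$ compact and connected, and let $H$ be the subgroup corresponding to $M\oplus 0$. Then $G/H\cong\Bbb Z(n)$ is finite, hence discrete, so $H$ is an open subgroup of $G$; a compact connected group is $\pounds$-cotorsion by \cite{Fu2}, so $H$ is an open $\pounds$-cotorsion subgroup; and $G/H\cong\Bbb Z(n)$, being finite, is a cotorsion group. Hence $G$ is a generalized $\pounds$-cotorsion group.

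For the converse, let $G$ be compact and generalized $\pounds$-cotorsion, and fix an open $\pounds$-cotorsion subgroup $H$ with $G/H$ cotorsion. First, $G/H$ is discrete (because $H$ is open) and compact (being a continuous image of $G$), hence finite. Second, $H$ is a compact $\pounds$-cotorsion group, so by \cite{Fu2} $H$ is connected. Since every open subgroup of $G$ contains the identity component $G_0$, while every connected subgroup is contained in $G_0$, we obtain $H=G_0$; thus $G_0$ is open and $G/G_0$ is finite. Now I would split the extension $0\to G_0\to G\to G/G_0\to 0$ by dualizing: $\widehat{G_0}$ is a torsion-free discrete group and $\widehat{G/G_0}$ is finite, and $Ext(\widehat{G_0},\widehat{G/G_0})=0$ because $Ext(A,\Bbb Z)$ is divisible for every torsion-free discrete group $A$ (so that $Ext(A,F)=0$ for every finite group $F$); via the duality $Ext(G/G_0,G_0)\cong Ext(\widehat{G_0},\widehat{G/G_0})$ the sequence splits, so $G\cong M\oplus F$ with $M:=G_0$ compact connected and $F:=G/G_0$ finite. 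It remains to identify $F$ as a cyclic group $\Bbb Z(n)$.

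I expect this last point to be the main obstacle: having reduced to $G\cong M\oplus F$ with $M$ connected and $F$ finite, one still has to exclude noncyclic $F$, and this cannot be done from connectedness of $M$ alone — it is precisely here that the $Ext$-vanishing in the definition of $\pounds$-cotorsion (equivalently, the structural constraints it places on $\widehat M$) must be used in full. Concretely I would work inside $\widehat G\cong\widehat M\oplus\widehat F$, whose torsion subgroup is $\widehat F$, and appeal to the description of $\widehat M$ furnished by \cite{Fu2} to force $\widehat F$, and hence $F$, to be cyclic. Everything else in the argument is routine once that classification is in hand.
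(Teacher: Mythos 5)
Your argument tracks the paper's proof closely and is, up to the final step, more careful than it. The $(\Leftarrow)$ direction is the same as the paper's. In the $(\Rightarrow)$ direction you, like the paper, invoke Fulp's classification to conclude that $H$ is connected, hence $H=G_{0}$ is open with finite quotient; where the paper simply asserts that $0\to G_{0}\to G\to G/G_{0}\to 0$ splits, you supply the standard justification via $Ext(G/G_{0},G_{0})\cong Ext(\widehat{G_{0}},\widehat{G/G_{0}})=0$, using that $Ext(A,\Bbb Z)$ is divisible for every torsion-free discrete $A$. All of that is correct.

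The step you flag as the ``main obstacle'' --- forcing the finite summand $F$ to be cyclic --- is indeed a genuine gap in your proposal, but you should be aware that it cannot be closed, because that part of the statement is false. Nothing in the definition of a generalized $\pounds$-cotorsion group constrains $F=G/G_{0}$ beyond finiteness: every finite abelian group is cotorsion, so for instance $G=(\Bbb R/\Bbb Z)\oplus\Bbb Z(2)\oplus\Bbb Z(2)$ is compact and generalized $\pounds$-cotorsion (take $H=(\Bbb R/\Bbb Z)\oplus 0\oplus 0$, an open, compact, connected --- hence $\pounds$-cotorsion --- subgroup whose quotient is $\Bbb Z(2)\oplus\Bbb Z(2)$), yet $G/G_{0}\cong\Bbb Z(2)\oplus\Bbb Z(2)$ is not cyclic, so $G$ is not of the form $M\oplus\Bbb Z(n)$ with $M$ connected. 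Your hope that the structure of $\widehat{M}$ coming from \cite{Fu2} would pin down $\widehat{F}$ cannot work: $M$ and $F$ are independent direct summands, and the hypotheses impose no further restriction on $F$. The paper's own proof conceals this by the non sequitur ``$G/H$ is a finite cotorsion group, hence $G/H\cong\Bbb Z(n)$.'' The correct conclusion, which your argument does fully establish, is that a compact group is generalized $\pounds$-cotorsion if and only if $G\cong M\oplus F$ with $M$ compact connected and $F$ finite.
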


\begin{proof}
Let $G$ be a compact generalized $\pounds-$cotosion group. Then, there exists an open $\pounds-$ cotorsion subgroup $H$ of $G$ such that $G/H$ is a cotorsion group. By Corollary 9 of \cite{Fu2}, $H$ is connected. Since $H$ is open, $H=G_{0}$. So, $0\to H\hookrightarrow G\to G/H\to 0$ splits. Hence, $G\cong H\oplus G/H$. Since $G/H$ is a compact and discrete group, $G/H$ is a finite, cotorsion group. Hence, $G/H\cong \Bbb Z(n)$ for some positive integer $n$.

conversely, let $G\cong M\oplus \Bbb Z(n)$ which $M$ is a connected group and $n$ a positive integer number. Set $H=M$. Then $H\oplus 0$ is an open $\pounds-$cotosion subgroup of $G$ such that $G/(H\oplus 0)\cong \Bbb Z(n)$ is a cotorsion group.
\end{proof}

\begin{defn}
A group $G\in \pounds$ is said to be torsion-closed if $tG$ is closed in $G$ \cite{AA}.
\end{defn}

\begin{thm} \label{1}
A group $G\in \pounds$ is torsion-closed if and only if $G\cong \Bbb R^{n}\oplus C\oplus L$ where $C$ is a compact, connected, torsion-free group and $L$ contains a compact open subgroup $H\cong \prod_{i\in I}\Bbb Z/p_{i}^{r_{i}}\Bbb Z\oplus \prod_{p}\Delta_{p}^{n_{p}}$ ($\Delta_{p}$ denotes the group of $p-$adic integers and $\Bbb Z/p_{i}^{r_{i}}\Bbb Z$ is the cyclic group of order $p_{i}^{r_{i}}$) where only finitely many distinct primes $p_{i}$ and positive integers $r_{i}$ occur and $n_{p}$ are cardinal number.\cite{AA}
\end{thm}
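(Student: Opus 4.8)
The plan is to reduce, via the structure theorem for LCA groups, to the case of a compact group, dispose of that case by a Baire category argument together with Pontryagin duality, and then reassemble. Concretely, I would first write $G\cong\Bbb R^{n}\oplus G_{1}$ where $G_{1}$ contains a compact open subgroup $K$. Since $\Bbb R^{n}$ is torsion-free, $tG=tG_{1}$, and since $K$ is open, $tK=K\cap tG_{1}$ is open in $tG_{1}$ while $tG_{1}/tK$ embeds in the discrete group $G_{1}/K$; hence $tG$ is closed if and only if $tK$ is closed (equivalently, compact) in $K$, because once $tK$ is compact and open in $tG_{1}$ the subgroup $tG_{1}$ is locally compact, hence closed. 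Thus the problem reduces to describing compact $K$ with $tK$ closed.

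Next, for compact $K$ with $tK$ closed: writing $tK=\bigcup_{m\ge1}K[m]$ with each $K[m]=\{x\in K:mx=0\}$ closed, Baire's theorem on the compact space $tK$ gives an $N$ with $K[N]$ open in $tK$, so $tK/K[N]$ is a finite quotient and $tK=K[N]$ has bounded exponent. A compact group of bounded exponent has discrete dual of bounded exponent, hence a direct sum of finite cyclic groups, so $tK\cong\prod_{i\in I}\Bbb Z/p_{i}^{r_{i}}\Bbb Z$ with only finitely many distinct $p_{i}^{r_{i}}$. Also $K/tK$ is torsion-free (if $m\bar{x}=0$ then $Nmx=0$, so $x\in tK$) and compact, so its dual is discrete and divisible; dualizing $0\to tK\to K\to K/tK\to0$ exhibits $\widehat{K/tK}$ as a divisible subgroup of the discrete group $\widehat{K}$, hence a direct summand, so $K\cong(K/tK)\oplus tK$. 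Since a compact torsion-free group has dual a direct sum of copies of $\Bbb Q$ and of groups $\Bbb Z(p^{\infty})$, we get $K/tK\cong C\oplus\prod_{p}\Delta_{p}^{n_{p}}$ with $C$ compact, connected, torsion-free. Hence $K\cong C\oplus H$ with $H\cong\prod_{p}\Delta_{p}^{n_{p}}\oplus\prod_{i\in I}\Bbb Z/p_{i}^{r_{i}}\Bbb Z$.

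The step I expect to be the main obstacle is pulling $C$ off $G_{1}$ itself rather than merely off $K$. One checks that $C=(G_{1})_{0}$ is the identity component (it is closed, being compact, and connected and torsion-free), and that $G_{1}/C$ is totally disconnected with compact open subgroup $K/C\cong H$; the claim to verify is that $0\to C\to G_{1}\to G_{1}/C\to0$ splits, after which $G_{1}\cong C\oplus L$ with $L=G_{1}/C\supseteq H$ open, and, together with $G\cong\Bbb R^{n}\oplus G_{1}$, the asserted decomposition follows. I would prove the splitting by duality: by the duality for $Ext$ in $\pounds$ it suffices that $Ext(\widehat{C},\widehat{G_{1}/C})=0$, and since $\widehat{C}$ is a direct sum of copies of $\Bbb Q$ it is enough that $Ext(\Bbb Q,\widehat{G_{1}/C})=0$. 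Now $\widehat{G_{1}/C}$ sits in $0\to\widehat{G_{1}/K}\to\widehat{G_{1}/C}\to\widehat{H}\to0$, in which $\widehat{G_{1}/K}$ is compact and $\widehat{H}\cong\widehat{\prod_{p}\Delta_{p}^{n_{p}}}\oplus\widehat{\prod_{i\in I}\Bbb Z/p_{i}^{r_{i}}\Bbb Z}$ is a direct sum of a divisible group and a bounded group (the dual of a torsion-free profinite group is discrete torsion divisible; the dual of a profinite group of bounded exponent is discrete torsion bounded). For a discrete first argument, $Ext(\Bbb Q,-)$ agrees with the ordinary $Ext$ of abelian groups, which vanishes on cotorsion groups — and compact groups are algebraically compact, hence cotorsion, while divisible groups and bounded groups are cotorsion too. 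Feeding this into the $Ext(\Bbb Q,-)$-sequence of the displayed extension, the two outer terms vanish and so $Ext(\Bbb Q,\widehat{G_{1}/C})=0$, as required.

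For the converse, suppose $G\cong\Bbb R^{n}\oplus C\oplus L$ is of the stated form. Then $tG=tL$, and $tL\cap H=tH=\prod_{i\in I}\Bbb Z/p_{i}^{r_{i}}\Bbb Z$ is a product of finite groups of bounded exponent (only finitely many distinct $p_{i}^{r_{i}}$ occur), hence compact; since $tL/tH$ embeds in the discrete group $L/H$, the subgroup $tH$ is open in $tL$, so $tL$ is locally compact, hence closed in $L$ and in $G$. Thus $G$ is torsion-closed.
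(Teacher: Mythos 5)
The paper does not prove this theorem: it is imported verbatim from the reference \cite{AA}, so there is no in-paper argument to compare yours against. Judged on its own, your proof is essentially correct and complete. The reduction $G\cong\Bbb R^{n}\oplus G_{1}$, the observation that $tG$ is closed iff $tK=K\cap tG_{1}$ is compact (a locally compact subgroup of a Hausdorff group is closed), the Baire-category argument forcing $tK$ to be bounded, the splitting $K\cong tK\oplus(K/tK)$ via the divisible subgroup $\widehat{K/tK}\subseteq\widehat{K}$, and the identification $C=(G_{1})_{0}=K_{0}$ are all sound. The key nontrivial step — splitting $C$ off $G_{1}$ itself — is also handled correctly, and your argument makes visible exactly where the torsion-closed hypothesis enters: $Ext(\widehat{C},\widehat{G_{1}/C})$ vanishes precisely because $\widehat{H}$ is (divisible)$\oplus$(bounded), hence cotorsion as an abstract group, whereas for a general profinite $H$ the discrete torsion group $\widehat{H}$ need not be cotorsion and the splitting could fail. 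Two small points deserve tightening. First, after Baire gives $K[N]$ open in $tK$, you get $tK=K[Nm]$ with $m=[tK:K[N]]$, not $tK=K[N]$; the conclusion of bounded exponent is unaffected. Second, the assertion that for a \emph{discrete} first argument $Ext_{\pounds}(X,A)$ coincides with the abstract $Ext_{\Bbb Z}(X,A_{d})$ is true but should be justified (it follows from the projectivity of free discrete groups in $\pounds$ and the long exact sequence of \cite{FG1}, applied to a free presentation $0\to R\to F\to X\to 0$), and the fact that compact abelian groups are algebraically compact as abstract groups should carry a reference. With those citations added, your argument stands as a valid, self-contained proof of the quoted structure theorem.
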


\begin{lem} \label{3} \label{8}
Let $G$ be a divisible group in $\pounds$. Then, $G\cong \Bbb R^{n}\oplus \hat{C}\oplus N$ where $C$ is a compact, connected, torsion-free group and $N$ contains a compact open subgroup $K$ such that $N/K$ is a discrete torsion divisible group.
\end{lem}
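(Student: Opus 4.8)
The plan is to deduce this from Theorem \ref{1} by passing to the Pontryagin dual. The reason duality is needed is that $G$ itself need not be torsion-closed (for instance $\Bbb R/\Bbb Z$ is divisible while $t(\Bbb R/\Bbb Z)=\Bbb Q/\Bbb Z$ is dense), whereas its dual will turn out to be torsion-free, hence certainly torsion-closed.

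First I would verify that $\widehat G$ is torsion-free. Since $G$ is divisible, for every positive integer $n$ the multiplication-by-$n$ map $\mu_{n}\colon G\to G$ is surjective, so its adjoint $\widehat G\to\widehat G$ --- which is again multiplication by $n$ --- is injective; it follows that $\widehat G$ has no nonzero element of finite order, i.e. $t\widehat G=0$ is closed. Thus Theorem \ref{1} applies to $\widehat G$ and yields $\widehat G\cong\Bbb R^{m}\oplus C'\oplus L$ with $C'$ compact, connected and torsion-free, and $L$ containing a compact open subgroup $H'\cong\prod_{i\in I}\Bbb Z/p_{i}^{r_{i}}\Bbb Z\oplus\prod_{p}\Delta_{p}^{n_{p}}$ of the stated form. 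Being a subgroup of the torsion-free group $\widehat G$, $H'$ is torsion-free; but $\prod_{i\in I}\Bbb Z/p_{i}^{r_{i}}\Bbb Z$ is a bounded torsion group (only finitely many distinct pairs $(p_{i},r_{i})$ occur), so that factor must be trivial and $H'\cong\prod_{p}\Delta_{p}^{n_{p}}$.

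Next I would dualize back: $G\cong\Bbb R^{m}\oplus\widehat{C'}\oplus\widehat L$, and I put $n=m$, $C=C'$ and $N=\widehat L$. Then $C$ is compact, connected and torsion-free, so $\widehat C$ is of the required kind. It remains to locate a compact open subgroup of $N$. Applying duality to the extension $0\to H'\to L\to L/H'\to 0$ (which is proper because $H'$ is open) gives $0\to(\widehat L,H')\to\widehat L\to\widehat{H'}\to 0$, where the annihilator $(\widehat L,H')\cong\widehat{L/H'}$ is compact, being the dual of the discrete group $L/H'$, and is open, since its quotient $\widehat{H'}$ is discrete ($H'$ being compact). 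Hence $K:=(\widehat L,H')$ is a compact open subgroup of $N$, and
$$N/K\;\cong\;\widehat{H'}\;\cong\;\bigoplus_{p}\widehat{\Delta_{p}}^{\,(n_{p})}\;\cong\;\bigoplus_{p}\Bbb Z(p^{\infty})^{(n_{p})},$$
which is discrete, torsion and divisible --- exactly the asserted form.

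I do not expect a serious obstacle here: after dualizing, the argument is essentially bookkeeping with annihilators. The two places to be careful are, in the second step, that it is the torsion-freeness of $\widehat G$ rather than of $G$ that annihilates the $\prod_{i}\Bbb Z/p_{i}^{r_{i}}\Bbb Z$ summand (leaving the $\Delta_{p}^{n_{p}}$, equivalently the $\Bbb Z(p^{\infty})^{(n_{p})}$, as the only contribution beyond $\Bbb R^{m}$ and the compact connected part), and, in the third step, checking that $(\widehat L,H')$ is simultaneously compact and open with $\widehat L/(\widehat L,H')\cong\widehat{H'}$.
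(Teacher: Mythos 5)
Your proof is correct and follows essentially the same route as the paper: dualize, apply Theorem \ref{1} to the torsion-free (hence torsion-closed) group $\widehat G$, then set $N=\widehat L$ and $K=(\widehat L,H')$ and identify $N/K\cong\widehat{H'}$. The only (harmless) difference is where the bounded factor $\prod_{i}\Bbb Z/p_{i}^{r_{i}}\Bbb Z$ is eliminated --- you kill it upstream using torsion-freeness of $\widehat G$, while the paper kills its dual summand $B$ downstream using divisibility of $N/K$.
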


\begin{proof}
Let $G\in \pounds$ be a divisible group. Then, $\hat{G}$ is torsion-free and torsion-closed. By Theorem \ref{1}, $\hat{G}\cong \Bbb R^{n}\oplus C\oplus L$ where $C$ is a compact, connected, torsion-free group and $L$ contains a compact open subgroup $H\cong \prod_{i\in I}\Bbb Z/p_{i}^{r_{i}}\Bbb Z\oplus \prod_{p}\Delta_{p}^{n_{p}}$ ($\Delta_{p}$ denotes the group of $p-$adic integers and $\Bbb Z/p_{i}^{r_{i}}\Bbb Z$ is the cyclic group of order $p_{i}^{r_{i}}$) where only finitely many distinct primes $p_{i}$ and positive integers $r_{i}$ occur and $n_{p}$ are cardinal number. Set $N=\hat{L}$ and $K=(\hat{L},H)$. By Theorem 24.11 of \cite{HR}, $N/K\cong \hat{H}$. Hence, $N/K\cong B\oplus D$ where $B$ is a discrete bounded group and $D$ a discrete divisible torsion group. Since $N/K$ is divisible, $B=0$.
\end{proof}

\begin{thm} \label{2}
Let $G\in \pounds$ and $0 \to A \to B \to C \to 0$ be an extension in $\pounds$. Then, the following sequences are exact \cite{FG1}:
\begin{enumerate}
\item $0\to Hom(C,G)\to Hom(B,G)\to Hom(A,G)\to Ext(C,G)\to Ext(B,G)\to Ext(A,G)\to 0$
\item $0\to Hom(G,A)\to Hom(G,B)\to Hom(G,C)\to Ext(G,A)\to Ext(G,B)\to Ext(G,C)\to 0$
\end{enumerate}
\end{thm}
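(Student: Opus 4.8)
The plan is to build both six-term sequences from the classical pushout and pullback constructions on extensions, performed inside $\pounds$, and to pin down the single ingredient that is peculiar to locally compact abelian groups.

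I would first observe that the two statements are Pontryagin duals of one another, so only one need be proved. There are natural isomorphisms $Hom(X,Y)\cong Hom(\hat Y,\hat X)$ and $Ext(X,Y)\cong Ext(\hat Y,\hat X)$, and dualizing the given extension $0\to A\to B\to C\to 0$ produces the proper extension $0\to\hat C\to\hat B\to\hat A\to 0$; applying these isomorphisms term by term turns sequence (2) for this dual extension and the group $\hat G$ into sequence (1) for the original extension and $G$. So it suffices to establish (2).

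For (2), exactness of the first three terms is immediate, since $Hom(G,-)$ preserves the kernel $A=\ker(B\to C)$ (and a continuous map $G\to B$ with image in the closed subgroup $A$ is continuous into $A$). The connecting homomorphism $\Delta\colon Hom(G,C)\to Ext(G,A)$ sends $f\colon G\to C$ to the pullback extension $0\to A\to E\to G\to 0$, where $E=\{(b,g)\in B\times G:\psi(b)=f(g)\}$ carries the subspace topology; the technical point needed here is that $E$ is then locally compact and that the induced maps $A\to E$ and $E\to G$ are proper, so that $\Delta(f)$ genuinely represents a class in $Ext(G,A)$. This is routine but is exactly the place where the standing requirement that our short exact sequences be built from proper morphisms is used. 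Granting it, exactness at $Hom(G,C)$, at $Ext(G,A)$, and at $Ext(G,B)$ is the usual diagram chase with pullbacks and Baer sums of extensions, and it carries over to $\pounds$ unchanged because $\pounds$ is quasi-abelian and the pullback of a proper extension is again a proper extension.

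The part that genuinely uses the structure of LCA groups, and the step I expect to be the \textbf{main obstacle}, is exactness at the last spot: surjectivity of $Ext(G,B)\to Ext(G,C)$. The obstruction to lifting a class $\xi\in Ext(G,C)$ along $B\to C$ lives in $Ext^{2}(G,A)$, so this is equivalent to the assertion that $\pounds$ has homological dimension one. One reduces that, via the structure theorem $G\cong\Bbb R^{n}\oplus G_{1}$ with $G_{1}$ containing a compact open subgroup, to the building blocks: $\Bbb R^{n}$, which is projective and injective in $\pounds$; discrete groups, which have projective dimension at most one (each is a quotient of a free abelian group with free abelian kernel, and free abelian discrete groups satisfy $Ext(F,-)=0$); and compact groups, obtained from the discrete case by Pontryagin duality, which trades projectivity for injectivity. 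Dimension shifting along these resolutions gives $Ext^{2}\equiv 0$, the obstruction vanishes, and both six-term sequences terminate in $0$. Since this last, substantive step is precisely what is carried out in \cite{FG1}, in a paper of the present kind it is cleanest to cite it directly.
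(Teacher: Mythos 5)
The paper does not prove this statement at all --- it is quoted verbatim from Fulp--Griffith \cite{FG1} --- so your decision to defer the substantive step to that reference lands you exactly where the paper does, and to that extent the proposal is fine. Your reduction of (1) to (2) by Pontryagin duality is legitimate, but note that the natural isomorphism $Ext(X,Y)\cong Ext(\hat Y,\hat X)$ it relies on is itself one of the main theorems of \cite{FG1} and is comparable in depth to the six-term sequences, so this is a notational convenience rather than a genuine saving. The pullback construction of the connecting map, and exactness at the first five spots, are as routine as you say.

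The one step of your sketch that would not survive being written out is the end-surjectivity. You propose to place the obstruction in $Ext^{2}(G,A)$ and kill it by dimension shifting along projective resolutions of discrete groups and injective resolutions of compact groups. Two concrete problems. First, $\pounds$ has neither enough projectives nor enough injectives: by Moskowitz \cite{M} the projectives are the groups $\Bbb R^{n}\oplus F$ with $F$ free discrete and the injectives are the groups $\Bbb R^{n}\oplus(\Bbb R/\Bbb Z)^{I}$; a compact group such as $\prod_{I}\Bbb Z(2)$ with $I$ uncountable is not a proper quotient of any projective (its quotients by compact open kernels would force metrizability), and $\Bbb Z^{(J)}$ with $J$ infinite is not a closed subgroup of any injective (closed discrete subgroups of compactly generated groups are finitely generated). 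So a general $G$ admits neither of the resolutions you want to shift along; the only resolutions available are the ``mixed'' ones coming from a compact open subgroup and its discrete quotient. Second, the long exact sequences you would use to dimension-shift are precisely the sequences being proved, in both variables simultaneously, so the induction has to be interleaved with some care to avoid circularity. This is exactly what occupies Fulp--Griffith: they prove the surjectivity of $Ext(G,B)\to Ext(G,C)$ directly from the structure theorem $G\cong\Bbb R^{n}\oplus G_{1}$, $G_{1}$ with compact open subgroup, rather than through a vanishing theorem for $Ext^{2}$; the clean statement that $\pounds$ has Yoneda-$Ext$ dimension one is a much later result (Hoffmann and Spitzweck) whose proof takes the Fulp--Griffith sequences as input. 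Since the terminal $\to 0$ is the part of the theorem the rest of this paper actually leans on, the ``routine'' label on that step hides essentially all of the content; citing \cite{FG1} for it, as both you and the paper do, is the right call.
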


\begin{lem} \label{4}
Let $G$ be a discrete group such that $Ext(\hat{\Bbb Q},G)=0$. Then, $G$ is a torsion group.
\end{lem}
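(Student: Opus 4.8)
The plan is to prove the contrapositive: if a discrete group $G$ is not torsion, then $Ext(\hat{\Bbb Q},G)\ne 0$. Since $G$ is not torsion it contains an element of infinite order, so $\Bbb Z$ embeds in $G$ and we obtain an extension $0\to\Bbb Z\to G\to G/\Bbb Z\to 0$ in $\pounds$ (all three groups are discrete, so the monomorphism and the quotient map here are automatically proper). Feeding this extension and the group $\hat{\Bbb Q}$ into part (2) of Theorem \ref{2} produces the exact sequence
$$\cdots\to Hom(\hat{\Bbb Q},G/\Bbb Z)\to Ext(\hat{\Bbb Q},\Bbb Z)\to Ext(\hat{\Bbb Q},G)\to\cdots.$$
The first move is to observe that $\hat{\Bbb Q}$ is compact (because $\Bbb Q$ is discrete) and connected (because $\Bbb Q$ is torsion-free), so every continuous homomorphism from $\hat{\Bbb Q}$ into the discrete group $G/\Bbb Z$ has connected image in a discrete group and is therefore trivial; that is, $Hom(\hat{\Bbb Q},G/\Bbb Z)=0$. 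Exactness then forces the map $Ext(\hat{\Bbb Q},\Bbb Z)\to Ext(\hat{\Bbb Q},G)$ to be injective, so it suffices to show $Ext(\hat{\Bbb Q},\Bbb Z)\ne 0$.

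To establish $Ext(\hat{\Bbb Q},\Bbb Z)\ne 0$ I would apply part (2) of Theorem \ref{2} once more, now to the standard extension $0\to\Bbb Z\to\Bbb R\to\Bbb R/\Bbb Z\to 0$ and again the group $\hat{\Bbb Q}$, obtaining
$$0\to Hom(\hat{\Bbb Q},\Bbb Z)\to Hom(\hat{\Bbb Q},\Bbb R)\to Hom(\hat{\Bbb Q},\Bbb R/\Bbb Z)\to Ext(\hat{\Bbb Q},\Bbb Z)\to\cdots.$$
Because $\Bbb R$ has no nontrivial compact subgroups, any continuous homomorphism from the compact group $\hat{\Bbb Q}$ to $\Bbb R$ is zero, so $Hom(\hat{\Bbb Q},\Bbb R)=0$; on the other hand $Hom(\hat{\Bbb Q},\Bbb R/\Bbb Z)=\widehat{\hat{\Bbb Q}}\cong\Bbb Q$ by Pontryagin duality. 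Exactness then forces $\Bbb Q$ to inject into $Ext(\hat{\Bbb Q},\Bbb Z)$, so $Ext(\hat{\Bbb Q},\Bbb Z)\ne 0$. Combining with the previous step gives $Ext(\hat{\Bbb Q},G)\ne 0$, contradicting the hypothesis; hence $G$ must be torsion.

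I do not expect a genuine obstacle here beyond assembling the correct diagram; the two key ideas are (i) reducing from $G$ to its subgroup $\Bbb Z$ and using the connectedness of $\hat{\Bbb Q}$ to kill the intervening $Hom$ terms, and (ii) reading off $Ext(\hat{\Bbb Q},\Bbb Z)\ne 0$ from the six-term sequence of $0\to\Bbb Z\to\Bbb R\to\Bbb R/\Bbb Z\to 0$, where the only thing to recognize is that $Hom(\hat{\Bbb Q},\Bbb R/\Bbb Z)$ is the Pontryagin bidual of $\Bbb Q$. If one prefers to avoid the explicit computation, the whole argument can instead be dualized through the isomorphism $Ext(\hat{\Bbb Q},G)\cong Ext(\hat{G},\Bbb Q)$, with $\hat{G}$ compact and having nontrivial identity component precisely when $G$ is not torsion; but the version above has the advantage of staying entirely within the tools already recorded in this section.
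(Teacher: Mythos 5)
Your argument is correct, and it takes a genuinely different route from the paper. The paper works with the torsion-free quotient: from $0\to tG\to G\to G/tG\to 0$ it deduces $Ext(\hat{\Bbb Q},G/tG)=0$, then feeds the dual of $0\to\Bbb Z\to\Bbb Q\to\Bbb Q/\Bbb Z\to 0$ into Theorem \ref{2}, kills $Hom(\widehat{\Bbb Q/\Bbb Z},G/tG)$ by a connectedness argument, and concludes $G/tG\cong Ext(\hat{\Bbb Z},G/tG)=0$ using Proposition 2.17 of \cite{FG1}. You instead prove the contrapositive by passing to the subgroup $\Bbb Z$ generated by an element of infinite order: the vanishing of $Hom(\hat{\Bbb Q},G/\Bbb Z)$ (compact connected into discrete) makes $Ext(\hat{\Bbb Q},\Bbb Z)\to Ext(\hat{\Bbb Q},G)$ injective, and you compute $Ext(\hat{\Bbb Q},\Bbb Z)\cong\Bbb Q\neq 0$ from the six-term sequence of $0\to\Bbb Z\to\Bbb R\to\Bbb R/\Bbb Z\to 0$ together with $Hom(\hat{\Bbb Q},\Bbb R)=0$ and $Hom(\hat{\Bbb Q},\Bbb R/\Bbb Z)\cong\Bbb Q$. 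All the individual steps check out: the extension $0\to\Bbb Z\to G\to G/\Bbb Z\to 0$ is proper since everything is discrete, and exactness at the relevant spots gives exactly the injectivity you claim. What your version buys is independence from Proposition 2.17 of \cite{FG1} (replaced by the elementary computation over $\Bbb R$) and a more explicit conclusion: whenever $G$ has an element of infinite order, a copy of $\Bbb Q$ embeds in $Ext(\hat{\Bbb Q},G)$. What the paper's version buys is a direct identification of the obstruction as $G/tG$ itself, which is slightly more structural. Either proof is acceptable here.
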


\begin{proof}
Consider the two exact sequences $0\to tG\hookrightarrow G\to G/tG\to 0$ and $0\to \hat{\Bbb Q/\Bbb Z}\to \hat{\Bbb Q}\to \hat{\Bbb Z}\to 0$. By Theorem \ref{2}, we have the two following exact sequences  $$(2.1)...\to Ext(\hat{\Bbb Q},G)\to Ext(\hat{\Bbb Q},G/tG)\to 0$$ $$(2.2)...\to Hom(\hat{\Bbb Q/\Bbb Z},G/tG)\to Ext(\hat{\Bbb Z},G/tG)\to Ext(\hat{\Bbb Q},G/tG)=0$$ By (2.1),$Ext(\hat{\Bbb Q},G/tG)=0 $. By Theorem 24.25 of \cite{HR}, $\hat{G/tG}$ is connected. Hence, by Corollary 2, p. 377 of \cite{M}, $Hom(\hat{\Bbb Q/\Bbb Z},G/tG)\cong Hom(\hat{G/tG},\Bbb Q/\Bbb Z)=0$. So, $$Hom(\hat{\Bbb Q/\Bbb Z},G/tG)=0$$ Hence, by Proposition 2.17 of \cite{FG1} and (2.2),$G/tG\cong Ext(\hat{\Bbb Z},G/tG)=0$ and $G$ is a torsion group.
\end{proof}

\begin{thm}
A non discrete, divisible group $G$ in $\pounds$ is generalized $\pounds-$cotorsion if and only if $G$ be $\pounds-$cotorsion.
\end{thm}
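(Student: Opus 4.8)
The plan is to treat the two implications separately, the reverse one being immediate and the forward one carrying all the content. If $G$ is $\pounds$-cotorsion, then $H=G$ is an open $\pounds$-cotorsion subgroup with $G/H=0$ cotorsion, so $G$ is generalized $\pounds$-cotorsion (this is already the Example). So assume $G$ is non-discrete, divisible and generalized $\pounds$-cotorsion, and fix an open $\pounds$-cotorsion subgroup $H\leq G$ with $G/H$ cotorsion. Since $H$ is open, $G/H$ is discrete, and being a quotient of the divisible group $G$ it is divisible.

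The first step I would take is to push the whole question down to the quotient. For an arbitrary torsion-free $X\in\pounds$, Theorem \ref{2}(2) applied to the extension $0\to H\to G\to G/H\to 0$ yields the exact sequence
$$Hom(X,G/H)\to Ext(X,H)\to Ext(X,G)\to Ext(X,G/H)\to 0 ,$$
and since $H$ is $\pounds$-cotorsion and $X$ is torsion-free we have $Ext(X,H)=0$, whence $Ext(X,G)\cong Ext(X,G/H)$. Thus $G$ is $\pounds$-cotorsion if and only if $Ext(X,G/H)=0$ for every torsion-free $X\in\pounds$, i.e. if and only if the discrete group $G/H$ is $\pounds$-cotorsion; by Corollary 10 of \cite{Fu2} (used already in Lemma \ref{6}) this holds exactly when $G/H$ is divisible and torsion. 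As $G/H$ is already divisible, the whole theorem reduces to the single assertion that $G/H$ is a torsion group.

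Proving that $G/H$ is torsion is the step I expect to be the main obstacle. Lemma \ref{4} is the natural tool — a discrete $D$ with $Ext(\hat{\Bbb Q},D)=0$ is torsion — but applying it to $D=G/H$ would require $Ext(\hat{\Bbb Q},G/H)=0$, which by the previous step equals $Ext(\hat{\Bbb Q},G)$, a special case of the very conclusion we want, so it cannot be quoted circularly. Instead I would exploit that $G$ is divisible and non-discrete: by Lemma \ref{8}, $G\cong \Bbb R^{n}\oplus\hat{C}\oplus N$ with $C$ compact connected torsion-free and $N$ containing a compact open subgroup $K$ with $N/K$ discrete torsion divisible, and then analyse where an open $\pounds$-cotorsion subgroup can sit. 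Since $H$ is open it contains $G_{0}=\Bbb R^{n}\oplus N_{0}$, so $\Bbb R^{n}$ contributes nothing to $G/H$ and $N/K$ contributes only torsion; the only possible torsion-free part of $G/H$ comes from the discrete $\Bbb Q$-vector space $\hat{C}$. Using that $H$ is $\pounds$-cotorsion — here I would bring in Fulp's description of $\pounds$-cotorsion LCA groups together with Corollaries 9 and 10 of \cite{Fu2}, applied to the (compact, discrete, and $\Bbb R^{n}$) constituents of $H$ — one should be forced to conclude $\hat{C}=0$, after which $G/H$ is a quotient of $N$ modulo a subgroup containing $N_{0}$, hence torsion. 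Plugging this back into the first step gives $Ext(X,G)=0$ for all torsion-free $X\in\pounds$, i.e. $G$ is $\pounds$-cotorsion.

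In short, the crux is that "having an open $\pounds$-cotorsion subgroup" is a priori weaker than "being $\pounds$-cotorsion", and the real work is the bookkeeping — via Theorem \ref{1}, Lemma \ref{8} and Fulp's structure results — showing that for a divisible non-discrete group this gap collapses. I expect most of the effort, and the place where the argument is most delicate, to be in controlling identity components, compact open subgroups, and the torsion-free rank of $G/H$, in particular in ruling out a surviving $\hat{C}$-summand.
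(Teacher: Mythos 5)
Your reduction of the forward implication to the single claim that $G/H$ is torsion is correct and matches the paper: the long exact sequence of Theorem \ref{2} kills $Ext(X,H)$ and leaves $Ext(X,G)\cong Ext(X,G/H)$, and Corollary 10 of \cite{Fu2} disposes of a discrete divisible torsion quotient. But two things go wrong after that. First, your charge of circularity against Lemma \ref{4} misreads how it is meant to be used: the paper proves $Ext(\hat{\Bbb Q},G)=0$ \emph{directly} from the structure of a divisible group, with no appeal to the generalized $\pounds$-cotorsion hypothesis --- Lemma \ref{8} gives a compact open $K\le N$ with $N/K$ discrete torsion divisible, $Ext(\hat{\Bbb Q},K)=0$ because $K$ is compact, and $Ext(\hat{\Bbb Q},N/K)=0$ by Corollary 10 of \cite{Fu2} --- and only then pushes this onto the quotient $G/H$ and applies Lemma \ref{4}. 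That is a legitimate, non-circular bootstrap.

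Second, and this is the genuine gap: the step you defer with ``one should be forced to conclude $\hat{C}=0$'' is not bookkeeping, it is false, so no argument can supply it. Take $G=\Bbb R\oplus\Bbb Q$. This is non-discrete and divisible; $H=\Bbb R\oplus 0$ is an open $\pounds$-cotorsion subgroup (since $\Bbb R$ is injective in $\pounds$, $Ext(X,\Bbb R)=0$ for every $X$); and $G/H\cong\Bbb Q$ is cotorsion because it is divisible. So $G$ satisfies every hypothesis, yet in the decomposition of Lemma \ref{8} the summand $\hat{C}=\Bbb Q$ survives, and $G$ is not $\pounds$-cotorsion: $\Bbb Q$ is not torsion, so $Ext(\hat{\Bbb Q},\Bbb Q)\neq 0$ by the contrapositive of Lemma \ref{4} (equivalently, by Corollary 10 of \cite{Fu2}). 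Your plan therefore breaks exactly where you predicted it would be delicate. You should be aware that the paper's own proof has the same defect: it invokes Lemma \ref{8} but silently writes $G\cong\Bbb R^{n}\oplus N$, discarding the $\hat{C}$ summand without justification, and the example above shows that summand can be present; so the failure is in the statement being proved, not merely in your route to it.
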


\begin{proof}
Let $G$ be a non discrete, divisible and generalized $\pounds-$cotorsion group. By Lemma \ref{3}, $G\cong \Bbb R^{n}\oplus N$ where $N$ contains a compact open subgroup $K$ such that $N/K$ is a discrete torsion divisible group. First, we show that $Ext(\hat{\Bbb Q},G)=0$. It is sufficient to show that $Ext(\hat{\Bbb Q},N)=0$. Consider the exact sequence $0\to K\hookrightarrow N\to N/K\to 0$. By Theorem \ref{2}, we have the following exact sequence $$...\to Ext(\hat{\Bbb Q},K)\to Ext(\hat{\Bbb Q},N)\to Ext(\hat{\Bbb Q},N/K)\to 0$$
By Theorem 2.12 and Proposition 2.17 of \cite{FG1},$Ext(\hat{\Bbb Q},K)=0$. Also, $N/K$ is an $\pounds-$cotorsion group. Hence, by Corollary 10 of \cite{Fu2},$Ext(\hat{\Bbb Q},N/K)=0$. So, $Ext(\hat{\Bbb Q},N)=0$. Hence, by Lemma \ref{4},$G/H$ is a torsion group.  Now, suppose that $H$ be an $\pounds-$ cotorsion subgroup of $G$ such that $G/H$ is a cotorsion group. Let $X$ be a torsion-free group in $\pounds$. Consider the following exact sequence $$...\to Ext(X,H)\to Ext(X,G)\to Ext(X,G/H)\to 0$$

Since $H$ is $\pounds-$cotorsion, $Ext(X,H)=0$. By Corollary 10 of \cite{Fu2},$Ext(X,G/H)=0$. Hence, $Ext(X,G)=0$ for all torsion-free groups $X\in \pounds$ and $G$ is an $\pounds-$cotorsion group. Conversely is clear.
\end{proof}

\begin{cor}
Every generalized $\pounds-$cotorsion group can be imbedded in a $\pounds-$cotorsion group.
\end{cor}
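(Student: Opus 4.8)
The plan is to enlarge $G$ through its discrete cotorsion quotient. By the Definition, fix an open $\pounds$-cotorsion subgroup $H$ of $G$, so that $A:=G/H$ is a discrete cotorsion group. First I would imbed $A$ in a discrete divisible group $D$ (its divisible hull will do); then $D$ is again cotorsion, and since $A,D,D/A$ are all discrete, $0\to A\to D\to D/A\to 0$ is an extension in $\pounds$. Applying Theorem \ref{2}(1) to this extension with coefficient group $H$, the restriction map $Ext(D,H)\to Ext(A,H)$ is surjective, so the class of $0\to H\to G\to A\to 0$ lifts to an extension
$$0\to H\to G'\stackrel{p}{\longrightarrow} D\to 0$$
in $\pounds$ whose pullback along $A\hookrightarrow D$ is the original extension. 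Since $A\hookrightarrow D$ is a proper monomorphism and $A$ is open in $D$, that pullback is topologically the open subgroup $p^{-1}(A)$ of $G'$; hence $G$ is imbedded (as an open subgroup) in $G'$.

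Next I would verify that $G'$ is $\pounds$-cotorsion. For any torsion-free $X\in\pounds$, Theorem \ref{2}(2) applied to $0\to H\to G'\to D\to 0$ yields the exact sequence $Ext(X,H)\to Ext(X,G')\to Ext(X,D)\to 0$; since $H$ is $\pounds$-cotorsion we have $Ext(X,H)=0$, so $Ext(X,G')\cong Ext(X,D)$, and everything reduces to the discrete group $D$ being $\pounds$-cotorsion. This is the delicate step, and the one I expect to be the main obstacle: by Corollary 10 of \cite{Fu2} a discrete $\pounds$-cotorsion group must be divisible \emph{and torsion}, while the divisible hull of $A$ is torsion only when $A$ itself is torsion (equivalently when $\hat{A}$ is connected); in general $Ext(\hat{\Bbb Q},D)\neq 0$ by Lemma \ref{4}, so this naive choice of $G'$ need not be $\pounds$-cotorsion.

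To deal with the general case I would not keep the target discrete. Write the divisible hull of $A$ as $D_{t}\oplus D_{f}$ with $D_{t}$ discrete divisible torsion (hence $\pounds$-cotorsion) and $D_{f}$ torsion-free divisible, imbed $D_{f}$ into a compact connected group $C$ (which is $\pounds$-cotorsion by the connectedness criterion used in the proof of Lemma \ref{7}), so that $A$ imbeds into the $\pounds$-cotorsion group $D_{t}\oplus C$, and then rerun the lifting construction with $D_{t}\oplus C$ in place of $D$. It is also worth recording, to link up with the preceding Theorem, that every $\pounds$-cotorsion LCA group is divisible (its discrete and compact constituents being divisible by Corollaries 9 and 10 of \cite{Fu2}), so the enlargement produced this way is automatically divisible and, in the non-discrete case, can alternatively be recognized as $\pounds$-cotorsion through that Theorem. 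The genuinely subtle point is that once the target ceases to be discrete the inclusion of $A$ into it is no longer a proper map, so one must either be content with imbedding $G$ by a continuous monomorphism, or else argue — more carefully than I have sketched — that the torsion-free rank of the quotient, and hence properness of the imbedding, can be kept intact.
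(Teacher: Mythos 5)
Your lifting construction is correct and complete precisely when $G/H$ is torsion: there the divisible hull $D$ of $A=G/H$ is discrete, divisible and torsion, hence $\pounds$-cotorsion by Corollary 10 of \cite{Fu2}; the sequence $0\to A\to D\to D/A\to 0$ is a proper extension of discrete groups, so Theorem \ref{2} really does give the surjection $Ext(D,H)\to Ext(A,H)$, the group $G'$ exists, $G$ sits in it as an open subgroup, and $Ext(X,G')\cong Ext(X,D)=0$ for torsion-free $X\in\pounds$. You have also diagnosed accurately why this stops working in general. The problem is that your proposed repair does not close the gap. Once you replace $D$ by $D_{t}\oplus C$ with $C$ compact, connected and infinite, the inclusion $A\hookrightarrow D_{t}\oplus C$ is no longer proper: the image of the torsion-free part of $A$ is an infinite subgroup of a compact group and so cannot be discrete in the subspace topology, hence $0\to A\to D_{t}\oplus C\to Q\to 0$ is not an extension in $\pounds$, Theorem \ref{2} does not apply to it, and the surjectivity of $Ext(D_{t}\oplus C,H)\to Ext(A,H)$ on which your lift depends is unavailable. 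Without that lift $G'$ is never constructed --- you do not even get a continuous monomorphism out of this method. As written, the argument proves the corollary only under the extra hypothesis that $G/H$ is torsion, and in particular does not cover a discrete $G$ that is a non-torsion cotorsion group (e.g.\ the $p$-adic integers with the discrete topology), where $D_{f}\neq 0$ is forced.

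The paper takes a genuinely different route, which your proposal never touches: instead of enlarging the discrete quotient $G/H$, it enlarges $G$ itself to a divisible overgroup $G^{*}$ in which $H$ is still open and $G^{*}/H$ is still cotorsion, and then invokes the immediately preceding theorem (a non-discrete divisible generalized $\pounds$-cotorsion group is $\pounds$-cotorsion); that theorem is the whole reason the corollary is placed where it is. You should be aware, though, that the obstruction you uncovered is not an artifact of your method. A proper imbedding $A\to E$ realizes $A$ as a discrete closed subgroup of $E$, and discrete closed subgroups of the $\pounds$-cotorsion groups produced by Fulp's structure results have finite torsion-free rank; so for a quotient $G/H$ of infinite torsion-free rank no proper-extension version of your strategy can succeed. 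If you pursue this statement further, the first thing to pin down is what ``imbedded'' is meant to assert (a proper monomorphism, or merely a continuous injective one), and how the enlargement $G^{*}$ is actually constructed so that $H$ stays open and $G^{*}/H$ stays cotorsion.
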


\begin{proof}
Let $G\in \pounds$ be a generalized $\pounds-$cotorsion group. Then, $G$ contains an open $\pounds-$cotosion subgroup $H$. Clearly, $H$ is an open, $\pounds-$cotorsion subgroup of $G^{*}$. Also, $G^{*}/H$ is a cotorsion group. By Theorem \ref{8}, $G^{*}$ is $\pounds-$cotorsion.
\end{proof}
\bibliographystyle{amsplain}

\bigskip
\bigskip

{\footnotesize {\bf A. A. Alijani}\; \\ {Mollasadra Technical and Vocational College}, {Technical and Vocational University,} {Ramsar, Iran.}\\
{\tt Email: alijanialiakbar@gmail.com}\\

\end{document}